 \newtheorem{thm}{Theorem}
 \newtheorem{lem}[thm]{Lemma}
 \theoremstyle{definition}
 \theoremstyle{remark}
 \newtheorem{rem}[thm]{Remark}
 \numberwithin{equation}{section}
\DeclareSymbolFont{bbold}{U}{bbold}{m}{n}
\DeclareSymbolFontAlphabet{\mathbbold}{bbold}
\newcommand{\1}{\mathbbold{1}}
\newcommand{\C}{\mathbb C}
\newcommand{\R}{\mathbb R}
\newcommand\cl{\operatorname{cl}}
\newcommand\Id{\mathrm{I}}
\newcommand\la{\lambda}
\newcommand{\rfrac}[2]{\tfrac{#1}{\raisebox{0.1em}{\scriptsize$#2$}}}
\newcommand{\sk}[2]{\langle #1,#2\rangle}
\newcommand{\bsk}[2]{\bigl\langle #1,#2\bigr\rangle}
\newcommand{\nn}[2]{\norm{#1}_{#2}}
\newcommand*\if@single[3]{%
  \setbox0\hbox{${\mathaccent"0362{#1}}^H$}%
  \setbox2\hbox{${\mathaccent"0362{\kern0pt#1}}^H$}%
  \ifdim\ht0=\ht2 #3\else #2\fi
  }
\newcommand*\rel@kern[1]{\kern#1\dimexpr\macc@kerna}
\newcommand*\widebar[1]{\@ifnextchar^{{\wide@bar{#1}{0}}}{\wide@bar{#1}{1}}}
\newcommand*\wide@bar[2]{\if@single{#1}{\wide@bar@{#1}{#2}{1}}{\wide@bar@{#1}{#2}{2}}}
\newcommand*\wide@bar@[3]{%
  \begingroup
  \def\mathaccent##1##2{%
    \if#32 \let\macc@nucleus\first@char \fi
    \setbox\z@\hbox{$\macc@style{\macc@nucleus}_{}$}%
    \setbox\tw@\hbox{$\macc@style{\macc@nucleus}{}_{}$}%
    \dimen@\wd\tw@
    \advance\dimen@-\wd\z@
    \divide\dimen@ 3
    \@tempdima\wd\tw@
    \advance\@tempdima-\scriptspace
    \divide\@tempdima 10
    \advance\dimen@-\@tempdima
    \ifdim\dimen@>\z@ \dimen@0pt\fi
    \rel@kern{0.6}\kern-\dimen@
    \if#31
      \overline{\rel@kern{-0.6}\kern\dimen@\macc@nucleus
                \rel@kern{0.4}\kern\dimen@}%
      \advance\dimen@0.4\dimexpr\macc@kerna
      \let\final@kern#2%
      \ifdim\dimen@<\z@ \let\final@kern1\fi
      \if\final@kern1 \kern-\dimen@\fi
    \else
      \overline{\rel@kern{-0.6}\kern\dimen@#1}%
    \fi
  }%
  \macc@depth\@ne
  \let\math@bgroup\@empty \let\math@egroup\macc@set@skewchar
  \mathsurround\z@ \frozen@everymath{\mathgroup\macc@group\relax}%
  \macc@set@skewchar\relax
  \let\mathaccentV\macc@nested@a
  \if#31
    \macc@nested@a\relax111{#1}%
  \else
    \def\gobble@till@marker##1\endmarker{}%
    \futurelet\first@char\gobble@till@marker#1\endmarker
    \ifcat\noexpand\first@char A\else
      \def\first@char{}%
    \fi
    \macc@nested@a\relax111{\first@char}%
  \fi
  \endgroup
}
\def\@row#1,{#1\@ifnextchar;{\@gobble}{&\@row}}
\def\@matrix{%
    \expandafter\@row\my@arg,;%
    \@ifnextchar({\\ \get@in@paren{\@matrix}}{\after@matrix}%
    }
\def\matrixtype#1#2#3{%
    \ifmmode\def\after@matrix{\end{#2}\right#3}%
    \else\def\after@matrix{\end{#2}\right#3}$\fi
    \left#1\begin{#2}\get@in@paren{\@matrix}%
    }
\def\@column#1,{#1\@ifnextchar;{\@gobble}{\\ \@column}}
\newcommand\vect{}
\def\svect(#1){\hbox{$\m@th\nulldelimiterspace=0pt
\left(\!\begin{smallmatrix}\@column#1,;\end{smallmatrix}\!\right)$}}
\def\vect{\get@in@paren{\@vect}}
\def\@vect{\left(\begin{matrix}\expandafter\@column\my@arg,;\end{matrix}\right)}
\def\get@in@paren#1({\def\my@arg{}\def\my@rest{}\def\after@get{#1}\get@arg}
\let\e@a\expandafter
\def\get@arg#1){\e@a\kl@test\my@rest#1(;}
\def\kl@test#1(#2;{\e@a\def\e@a\my@arg\e@a{\my@arg#1}%
                   \ifx:#2:\let\my@exec\after@get
                   \else\let\my@exec\get@arg
                        \e@a\def\e@a\my@arg\e@a{\my@arg(}%
                        \def@rest#2;%
                   \fi\my@exec}
\def\def@rest#1(;{\def\my@rest{#1\kl@zu}}
\def\kl@zu{)}
\newcommand\smat{\matrixtype({smallmatrix})}
\renewcommand\le{\leqslant}
\renewcommand\ge{\geqslant}
\newcommand{\breakOK}{\penalty20}
\newcommand{\set}[2]{\bigl\{#1\,\,{:}\,\breakOK\;#2\bigr\}}
\newcommand{\Ell}[1]{\mathrm{L}_{#1}}
\newcommand{\emdf}{\bfseries}
\newcommand{\dom}{\mathrm{dom}}
\newcommand{\abs}[1]{|#1|}
\newcommand{\norm}[1]{\|#1\|\rule[-0.4ex]{0pt}{0pt}}
\newcommand{\ud}{\mathrm{d}}
\newcommand{\konj}[1]{\widebar{#1}}
\newcommand{\ue}{\mathrm{e}}
\newcommand{\vphi}{\varphi}
\begin{document}

\title[Numerical range of generators]{On the numerical range of generators 
of\\ symmetric $\Ell{\infty}$-contractive semigroups}

\author[M.~Haase]{Markus Haase}

\address{%
Mathematisches Seminar\\
Christian-Albrechts-Universit\"at zu Kiel\\
24089 Kiel\\
Germany}

\email{haase@math.uni-kiel.de}

\author[P.C.~Kunstmann]{Peer Christian Kunstmann}
\address{Institut f\"ur Analysis\\ Karlsruher Institut f\"ur
Technologie (KIT)\\ 76128 Karlsruhe\\ Germany}

\email{peer.kunstmann@kit.edu}

\author[H.~Vogt]{Hendrik Vogt}
\address{
Fachbereich 3 - Mathematik\\
Universit\"at Bremen\\
28359 Bremen\\
Germany}

\email{hendrik.vo\rlap{\textcolor{white}{hugo@egon}}gt@uni-\rlap{\textcolor{white}{%
hannover}}bremen.de}

\subjclass{Primary 47A12, 47D07; Secondary 47B25}

\keywords{Numerical range, $\Ell{\infty}$-contractive semigroup, diffusion semigroup,
sector of analyticity}

\date{\today}

\begin{abstract}
A result by Liskevich and Perelmuter from 1995 yields 
the optimal angle of analyticity for
symmetric submarkovian semigroups on $\Ell{p}$, $1<p<\infty$. 
C.~Kriegler showed in 2011 that the result remains true 
without the assumption of positivity of the semigroup.  
Here we give an elementary proof of Kriegler's result.
\end{abstract}

\maketitle

\section{Introduction}\label{s.intro}

Let $(\Omega,\mu)$ be a measure space and let $A$ be a positive self-adjoint  
operator in $\Ell{2}(\Omega,\mu)$. Then $-A$ generates a strongly continuous 
semigroup $(T(t))_{t\ge0}$ which extends analytically  to a contraction 
semigroup on the open right half plane.  Such a  semigroup is called a
{\emdf symmetric $\Ell{\infty}$-contractive semigroup}%
\footnote{Such semigroups are called {\em diffusion semigroups}
in \cite{kriegler} 
and {\em symmetric contraction semigroups} in \cite{carb-drag}.}
if, in addition, one has
\[
 \nn{T(t)f}{\infty}\le\nn{f}{\infty},\quad \text{for all}\,\,
 f\in \Ell{2}(\Omega,\mu)
\cap \Ell{\infty}(\Omega,\mu).
\]
Then, by symmetry, the semigroup is also $\Ell{1}$-contractive, 
and by interpolation one obtains for each $1<p<\infty$ a consistent 
$C_0$-semigroup $(T_p(t))_{t\ge0}$ of contractions on $\Ell{p}(\Omega,\mu)$. 
Let the generator of this semigroup be denoted by $-A_p$, with domain
$\dom(A_p)$.  

In order to state the main result we define for $p \in [1,\infty)$ 
the mapping
\[ F_p : \C \to \C ,\qquad 
F_p(z) := \begin{cases}  z\abs{z}^{p-2} & \text{if}\,\, z\neq 0,\\
 0 & \text{if}\,\, z= 0.
\end{cases}
\]
Note that for $f\in \Ell{p}(\Omega,\mu)$ with $\norm{f}_p = 1$
the function $F_p(f) := F_p\circ f$
has the properties
\[  \norm{F_p(f)}_{p'} = 1,\qquad \int_\Omega f \cdot \konj{F_p(f)}\, \ud{\mu} = 1,
\]
where $p'$ is the dual exponent, i.e., $\frac{1}{p} + \frac{1}{p'} =1$. 
In case that $p >1$, $F_p(f)$ is uniquely characterized by these properties, 
and the {\emdf numerical range} of $A_p$ is the set of numbers
\[  \int_\Omega  A_pf \cdot \konj{F_p(f)}\,
\ud{\mu}, \qquad  \text{where}\quad f\in \dom(A_p),\ \norm{f}_p = 1.
\]
For $0 \le \vphi \le \frac{\pi}{2}$ we define the sector
\[
  \Sigma(\vphi) := \set{z\in\C\setminus\{0\}}{\mathopen|\arg z| \le \vphi} \cup \{0\}
\]
and call $\vphi$ its {\emdf opening angle}.
For $p\in(1,\infty)$, let $\Sigma_p := \Sigma(\vphi_p)$, where
\[ \vphi_p := \arcsin|1-\rfrac2p|.
\]
Note that $\vphi_{p'} = \vphi_p$. Moreover,
if $p$ passes from $2$ to $1$ or to $\infty$, then
$\abs{1- \rfrac{2}{p}}$ passes from $0$ to $1$ and the opening angle 
$\vphi_p =  \arcsin\abs{1- \rfrac{2}{p}}$ of $\Sigma_p$
passes from $0$ to $\frac{\pi}{2}$.
We point out that $\vphi_p$ is smaller than the angle $\frac\pi2|1-\rfrac2p|$
that arises from interpolation between $0$ and $\frac{\pi}{2}$.
A short computation reveals that
$\vphi_p$ has the (often used) alternative representation 
\[ \vphi_p = \arcsin\abs{1- \rfrac{2}{p}}
= \arctan \tfrac{\abs{p-2}}{2 \sqrt{p-1}}.
\]
Now, here is the main result.

\begin{thm}\label{thm1} 
Let $p \in (1, \infty)$ and let $-A_p$ be the generator 
on $\Ell{p}$ of a symmetric $\Ell{\infty}$-contractive semigroup on 
$\Ell{2}(\Omega,\mu)$. 
Then the numerical range of $A_p$ is contained in the sector 
$\Sigma_p$, and $(T_p(t))_{t\ge0}$ 
extends to an analytic contraction semigroup on the sector with 
opening angle $\arccos|1-\rfrac{2}{p}|$. 
\end{thm}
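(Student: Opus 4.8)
\emph{Plan of proof.} Put $\delta:=\abs{1-\rfrac2p}\in[0,1)$, so that $\vphi_p=\arcsin\delta$ and $\arccos\delta=\tfrac\pi2-\vphi_p$, and write $W(A_p)$ for the numerical range of $A_p$ introduced above. The first point is that the assertion on analyticity and contractivity is purely formal once $W(A_p)\subseteq\Sigma_p$ is known: for real $\psi$ with $\abs\psi<\tfrac\pi2-\vphi_p$ one has $W(\ue^{\ui\psi}A_p)=\ue^{\ui\psi}W(A_p)\subseteq\Sigma(\vphi_p+\abs\psi)$, which lies in the closed right half-plane, so $-\ue^{\ui\psi}A_p$ is dissipative on $\Ell p$; since $\lambda+A_p$ is invertible for every $\lambda>0$ and $\C\setminus\Sigma_p$ is connected, $\lambda-A_p$ is invertible for all $\lambda\notin\Sigma_p$, whence $-\ue^{\ui\psi}A_p$ is even $m$-dissipative and generates a $C_0$-contraction semigroup; rotating back, $(T_p(t))_{t\ge0}$ extends to a contraction semigroup holomorphic on $\Sigma(\arccos\delta)$. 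So the task is to prove $W(A_p)\subseteq\Sigma_p$.

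Fix $f\in\dom(A_p)$ with $\nn fp=1$. As $A_pf=\lim_{t\downarrow0}t^{-1}(f-T(t)f)$ in $\Ell p$, as $F_p(f)\in\Ell{p'}$, and as $\Sigma_p$ is closed, it suffices to show
\[
  \bsk{(\Id-T(t))f}{F_p(f)}=1-\bsk{T(t)f}{F_p(f)}\in\Sigma_p\qquad(t>0),
\]
where $\sk uv=\int_\Omega u\konj v\,\ud\mu$. Since $F_p\colon\Ell p\to\Ell{p'}$ is continuous, the left-hand side depends continuously on $f\in\Ell p$, so by density (and closedness of $\Sigma_p$) it is enough to treat $f$ in the subspace $\mathcal D_p:=\sset{g\in\Ell p}{g,F_p(g)\in\Ell2}$, which contains the bounded functions supported on a set of finite measure and hence is dense in $\Ell p$. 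Fix such an $f$ and $t>0$, and set $S:=T(t/2)$. Then $S$ is self-adjoint on $\Ell2$ with $0\le S\le\Id$, it is a contraction on $\Ell1$ and on $\Ell\infty$ (hence on every $\Ell q$), and $T(t)=S^2$, so that
\[
  \bsk{T(t)f}{F_p(f)}=\bsk{Sf}{SF_p(f)}_{\Ell2}.
\]
Everything is thus reduced to a statement about the single self-adjoint operator $S$, contractive on $\Ell1$ and $\Ell\infty$ simultaneously.

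Now to the heart of the matter. Write $f=\rho\,\omega$ with $\rho:=\abs f\ge0$ and $\abs\omega=1$ (set $\omega:=1$ where $f=0$); then $F_p(f)=\rho^{p-1}\omega$, the vector $h:=\rho^{p/2}\omega\in\Ell2$ has $\nn h2=1$, and $f=\abs h^{2/p}\omega$, $F_p(f)=\abs h^{2/p'}\omega$. Because $\re\bsk{Sf}{SF_p(f)}=\re\bsk{T(t)f}{F_p(f)}\le\nn{T_p(t)f}{p}\,\nn{F_p(f)}{p'}\le1$, the inclusion $1-\bsk{Sf}{SF_p(f)}\in\Sigma_p$ is equivalent to the sharp estimate
\[
  \bigl|\im\bsk{Sf}{SF_p(f)}\bigr|\;\le\;\tfrac{\abs{p-2}}{2\sqrt{p-1}}\,\Bigl(1-\re\bsk{Sf}{SF_p(f)}\Bigr),
\]
since $\vphi_p=\arctan\tfrac{\abs{p-2}}{2\sqrt{p-1}}$. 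The plan is to compare $\bsk{Sf}{SF_p(f)}=\bsk{S(\abs h^{2/p}\omega)}{S(\abs h^{2/p'}\omega)}$ with the symmetric ``$p=2$'' quantity $\bsk{Sh}{Sh}=\nn{Sh}2^2\le1$: the difference $1-\nn{Sh}2^2=\bsk{(\Id-T(t))h}{h}\ge0$ is precisely the nonnegative quadratic form that has to dominate the right-hand side, while the asymmetry between the exponents $\rfrac2p$ and $\rfrac2{p'}$ — whose size is governed by $\abs{1-\rfrac2p}=\delta=\sin\vphi_p$ — forces the imaginary part to be small. Concretely one writes $\im\bsk{Sf}{SF_p(f)}$ as an off-diagonal pairing carrying the factor $p-2$ (from differentiating $\rho\mapsto\rho^{p-1}$ against $\rho\mapsto\rho$), pairing the radial variation of $\rho$ against the angular variation of $\omega$; then one estimates it by the Cauchy–Schwarz inequality for the nonnegative form $\sk{S\cdot}{\cdot}$, against the Dirichlet energy $\bsk{(\Id-T(t))h}{h}$ on one side and a bounded angular quantity on the other (here the simultaneous $\Ell1$- and $\Ell\infty$-contractivity of $S$, via interpolation and Hölder, keeps the angular part under control), and optimises the weight in Cauchy–Schwarz; this produces exactly the factor $\tfrac{\abs{p-2}}{2\sqrt{p-1}}=\tan\vphi_p$. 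Letting $t\downarrow0$ and using the continuity/density reductions above then gives $W(A_p)\subseteq\Sigma_p$, and hence, by the first step, the full theorem.

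The one genuinely difficult point is this last estimate. In the positivity-preserving (submarkovian) case of Liskevich and Perelmuter the radial/angular splitting and the correct sign and size of the cross term come out of the pointwise Beurling–Deny calculus for the Dirichlet form of $A$; the content of Kriegler's theorem is that positivity is \emph{not} needed, and the elementary route is to carry out the splitting and the optimal Cauchy–Schwarz directly on the nonnegative quadratic form $u\mapsto\bsk{(\Id-T(t))u}{u}$ (equivalently, in the limit $t\downarrow0$, the Dirichlet form of $A$), with $\Ell\infty$-contractivity alone — through invariance of the unit ball of $\Ell\infty$ under $S$ — doing the work that positivity did before.
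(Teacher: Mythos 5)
Your formal reductions are sound and essentially parallel the paper's: the passage from the numerical range inclusion to the analyticity statement via Lumer--Phillips/dissipativity, the reduction to showing $\bsk{(\Id-T(t))f}{F_p(f)}\in\Sigma_p$ for $t>0$ by dividing by $t$ and letting $t\searrow0$, and the density argument are all correct (the paper reduces instead to step functions and works with the matrix $a_{kj}=\sk{T(t)\1_{B_j}}{\1_{B_k}}$, while you reduce to $f,F_p(f)\in\Ell{2}$ and split $T(t)=S^2$; both are legitimate). But the entire mathematical content of the theorem sits in the one step you do not carry out: the inequality $\abs{\im\bsk{Sf}{SF_p(f)}}\le\tfrac{\abs{p-2}}{2\sqrt{p-1}}\bigl(1-\re\bsk{Sf}{SF_p(f)}\bigr)$. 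Your description of how to prove it --- ``write $\im\bsk{Sf}{SF_p(f)}$ as an off-diagonal pairing carrying the factor $p-2$, estimate by Cauchy--Schwarz against the Dirichlet energy, optimise the weight'' --- is a programme, not a proof, and it is precisely the programme that is known to work only in the positivity-preserving case. There the Beurling--Deny representation gives $\bsk{(\Id-T(t))u}{u}$ as an integral of $\abs{u(x)-u(y)}^2$ against a \emph{nonnegative} kernel, which is what licenses the pointwise radial/angular splitting and the weighted Cauchy--Schwarz. Without positivity there is no such kernel of fixed sign, so there is no canonical positive measure against which to apply Cauchy--Schwarz, and your closing assertion that ``$\Ell{\infty}$-contractivity alone \dots\ [does] the work that positivity did before'' is exactly the claim that needs a new idea; as stated, the plan stalls at this point.

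The missing ingredients are two. First, the scalar inequality $(w-z)\cdot\konj{F_p(w)-F_p(z)}\in\Sigma_p$ for all $z,w\in\C$ (the Liskevich--Perelmuter lemma), which your sketch never isolates but which is where the constant $\arcsin\abs{1-\rfrac2p}$ actually comes from. Second, and more importantly, the device that replaces positivity: for a step function $f=\sum_j c_j\1_{B_j}$ one writes $a_{kj}=\la_{kj}\abs{a_{kj}}$ with $\abs{\la_{kj}}=1$, uses $\la_{kj}F_p(c_j)=F_p(\la_{kj}c_j)$ together with the symmetry $a_{jk}=\konj{a_{kj}}$ to recast the off-diagonal part as $\tfrac12\sum_{j,k}\abs{a_{kj}}\,(\la_{kj}c_j-c_k)\bigl(\konj{F_p(\la_{kj}c_j)}-\konj{F_p(c_k)}\bigr)$, and controls the diagonal remainder by $\sum_k\abs{a_{kj}}\le\mu(B_j)$, which follows from $\Ell{1}$-contractivity applied against the unimodular test function $\sum_k\la_{kj}\1_{B_k}$. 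This phase trick is what converts the sign-indefinite kernel into a nonnegative one at the cost of rotating the arguments of $F_p$, and it is the step your proposal would have to supply before the theorem is proved.
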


Under the additional assumption that the semigroup 
consists of pos\-i\-tiv\-ity-pre\-serv\-ing
operators, Theorem~\ref{thm1}
is due to Liskevich and Perelmuter \cite{lisk-perel}.
The full result was established by Kriegler in \cite{kriegler} 
in the framework of  non-commutative operator theory. 
Recently, Theorem~\ref{thm1} has been recovered 
by Carbonaro and Dragi\v{c}evi\'c in \cite{carb-drag}
as a corollary of 
much stronger results. 
In \cite{haase}, the first-named author streamlined and extended 
some of the methods used in \cite{carb-drag} and showed that 
Theorem~\ref{thm1} can be deduced easily without making use of
Bellman functions (which feature prominently in Carbonaro and 
Dragi\v{c}evi\'c's work). 

\medskip

In the following, we shall present an essentially elementary
proof of Theorem~\ref{thm1} extending the arguments from  
\cite{lisk-perel}.
The relation to the other proofs
shall be explained in Section~\ref{s.comparison} below. We note that 
the second assertion in Theorem~\ref{thm1} follows from the first
by virtue of the Lumer-Phillips theorem and the exponential formula
$T_p(t) = \operatorname{s-lim}_{n\to\infty} (\Id + \rfrac{t}{n} A_p)^{-n}$,
cf.\ \cite[Theorem~3.14]{Isem1415}. Hence, it suffices to prove the first assertion.

\section{A two-dimensional special case}

Consider the special case $\Omega = \{1, 2\}$
with measure $\mu = \delta_1 + \delta_2$ and the matrix
\[  A =  \begin{pmatrix}  1 & -1\\ -1 & 1 \end{pmatrix}.
\]
Then $\Ell{p}(\Omega,\mu) = \C^2$ with the usual $p$-norm and   
\[ \ue^{-tA} = \tfrac{1}{2}
\begin{pmatrix}  1+ \ue^{-2t} & 1 - \ue^{-2t} \\ 1 - \ue^{-2t}  & 
1 + \ue^{-2t} \end{pmatrix}\quad \text{for $t\ge 0$}.
\]
Hence, $-A$ generates a (positivity-preserving) 
symmetric $\Ell{\infty}$-contractive semigroup.
For this special case, Theorem~\ref{thm1} reduces to the 
assertion
\begin{equation}\label{scalar-inclusion}
(w-z) \cdot \konj{F_p(w) - F_p(z)} \in  \Sigma_p \quad \text{for all 
$z,w\in \C$},
\end{equation}
which will be established with the next lemma. Moreover, Lemma \ref{LP-lem}
also shows that 
the sector $\Sigma_p$ in Theorem~\ref{thm1} is  optimal
already in this special case.

\begin{lem}\label{LP-lem}
For all $p\in(1,\infty)$ and $z,w\in\C$ one has
\begin{equation}\label{scalar-identity}
  \cl\set{ (w-z) \cdot \konj{F_p(w) - F_p(z)} }{ z,w \in \C } = \Sigma_p.
\end{equation}
\end{lem}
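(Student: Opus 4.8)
The plan is to prove the two inclusions $\subseteq$ and $\supseteq$ in \eqref{scalar-identity} separately, and to reduce the computation to a one-real-variable estimate by exploiting the homogeneity and rotational symmetry of $F_p$.

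First I would record the scaling properties of $F_p$: for $\lambda > 0$ and $\zeta \in \C$ with $|\zeta| = 1$ one has $F_p(\lambda \zeta z) = \lambda^{p-1}\zeta F_p(z)$ (using $\overline{\zeta}\zeta = 1$ appropriately, i.e.\ $F_p(\zeta z) = \zeta^{?}\dots$ — more precisely $F_p(cz) = c|c|^{p-2}F_p(z)$ for $c \in \C\setminus\{0\}$). Consequently, replacing $(z,w)$ by $(cz,cw)$ multiplies $(w-z)\overline{F_p(w)-F_p(z)}$ by $c|c|^{p-2}\cdot\overline{c|c|^{p-2}} = |c|^{2(p-1)} > 0$, which does not change membership in the sector $\Sigma_p$. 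Hence in proving $\subseteq$ we may normalize, say, $w = 1$ (the case $w = z$ being trivial, giving $0$), or if $w=0$ normalize $z=1$. So the problem reduces to understanding the single-variable function $g(z) := (1-z)\overline{1 - F_p(z)}$ as $z$ ranges over $\C$, together with the boundary case $z \overline{F_p(z)} = |z|^p \in [0,\infty) \subseteq \Sigma_p$. Then I would write $z = r\ue^{\ui\theta}$ and compute $\re g$ and $\im g$ explicitly; the claim $g(z) \in \Sigma_p$ becomes the inequality $|\im g(z)| \le (\tan\vphi_p)\,\re g(z)$, equivalently, using $\tan\vphi_p = \frac{|p-2|}{2\sqrt{p-1}}$, a polynomial-type inequality in $r = |z|$ and $\cos\theta$. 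I expect this to be the known Liskevich–Perelmuter computation, and the key algebraic fact should be that after clearing denominators one obtains a quadratic in some auxiliary variable (e.g.\ $r^{p-1}$ or $\cos\theta$) whose discriminant has the right sign precisely because of the value of $\vphi_p$; this is where the specific form $\arcsin|1-\frac2p|$ enters.

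For the reverse inclusion $\supseteq$, and for the sharpness claim, I would exhibit explicit choices of $z, w$ (or a sequence thereof) realizing the boundary rays $\arg = \pm\vphi_p$ of $\Sigma_p$. A natural ansatz is to take $z$ close to $1$ along a suitable direction, i.e.\ $z = 1 + \varepsilon\ue^{\ui\alpha}$ with $\varepsilon \to 0$, and linearize: $1 - F_p(z) \approx -\varepsilon\,DF_p(1)[\ue^{\ui\alpha}]$, where the real-linear differential $DF_p(1)$ acts on $\xi \in \C \cong \R^2$ by $DF_p(1)[\xi] = (p-1)\re\xi + \ui\,\im\xi$ (the eigenvalues $p-1$ in the radial direction and $1$ in the tangential direction). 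Then $g(z) \approx \varepsilon^2\,\ue^{\ui\alpha}\,\overline{DF_p(1)[\ue^{\ui\alpha}]} = \varepsilon^2\bigl((p-1)\cos^2\alpha + \sin^2\alpha + \ui(1 - (p-1))\sin\alpha\cos\alpha\bigr)$, and optimizing $\alpha$ makes the argument of this number run over $[-\vphi_p, \vphi_p]$; indeed the maximal argument of $(p-1)c^2 + s^2 + \ui(1-(p-1))cs$ (with $c = \cos\alpha$, $s = \sin\alpha$) is exactly $\arctan\frac{|p-2|}{2\sqrt{p-1}} = \vphi_p$, by the standard calculation for the image of the unit circle under a positive-definite symmetric $2\times 2$ form. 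Since $\Sigma_p$ is closed and the set on the left of \eqref{scalar-identity} is scale-invariant, once both boundary rays and $0$ are in the closure, the full sector is; combined with the inclusion $\subseteq$ this gives equality.

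The main obstacle will be the inclusion $\subseteq$: verifying for \emph{all} $z,w \in \C$ — not just infinitesimally near the diagonal — that $(w-z)\overline{F_p(w)-F_p(z)}$ stays in $\Sigma_p$ requires a genuine global inequality, not merely the linearized picture, and the bookkeeping with the two cases $p \in (1,2)$ and $p \in (2,\infty)$ (where $|1-\frac2p|$ is $1-\frac2p$ resp.\ $\frac2p-1$) plus the limiting cases $z=0$, $w=0$, $z=w$ must be handled. I would reduce it as far as possible by the homogeneity normalization above and then by a rotation fixing $w$ real positive, so that only the argument of $z$ and the ratio $|z|/w$ remain as free real parameters; the resulting two-variable inequality should then succumb to calculus (checking the boundary of the parameter range and the interior critical points) or to a clever completion of squares, and I would expect the paper to present exactly such a streamlined version of the Liskevich–Perelmuter argument.
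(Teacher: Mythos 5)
Your treatment of the converse inclusion $\supseteq$ is essentially the paper's: differentiate along a ray emanating from a fixed $z$, identify the real-linear differential of $F_p$ at $1$ as the symmetric matrix with eigenvalues $p-1$ and $1$, and compute that the arguments of $\xi\cdot\konj{DF_p(1)[\xi]}$ fill exactly $[-\vphi_p,\vphi_p]$. That computation is correct and is precisely the content of the paper's auxiliary geometric lemma.

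The genuine gap is in the inclusion $\subseteq$, which you rightly single out as the main obstacle but then do not prove: after normalizing $w=1$ and passing to polar coordinates you are left with a global two-variable inequality that you only assert ``should succumb to calculus or a clever completion of squares''. Nothing in the proposal establishes it, and your remark that the claim ``requires a genuine global inequality, not merely the linearized picture'' points in exactly the wrong direction. The paper's key observation is that the linearized picture \emph{does} suffice, because $\Sigma_p$ is a closed \emph{convex} cone. Writing $h=w-z$ and assuming $0$ does not lie on the segment from $z$ to $w$ (otherwise $z=-sw$ with $s\ge0$, or one of the points is $0$, and the product is a nonnegative multiple of $|w|^p$ or $|z|^p$, hence $\ge0$), one has
\[
  (w-z)\cdot\konj{F_p(w)-F_p(z)} \;=\; \int_0^1 h\cdot\konj{F_p'(z+th)\,h}\,\ud{t},
\]
where $F_p'$ is the (real) Jacobian; at every $y\ne0$ one computes $F_p'(y)=|y|^{p-2}\bigl(\mathrm{I}_2+\tfrac{p-2}{|y|^2}yy^t\bigr)$, a positive multiple of a symmetric matrix with eigenvalues $1$ and $p-1$, so each integrand lies in $\Sigma_p$ by the very pointwise computation you already carried out at $y=1$. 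Since an average (integral against a probability measure) of $\Sigma_p$-valued quantities stays in the closed convex cone $\Sigma_p$, the global inclusion follows with no further estimate. In short: your infinitesimal analysis is the whole proof once you add convexity of the sector and the integral form of the mean value theorem; without that step, the hard half of the lemma remains unproven in your write-up.
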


The inclusion $\subseteq$  in Lemma~\ref{LP-lem} has been 
proved originally by Liskevich and Perelmuter
\cite[Lemma~2.2]{lisk-perel}.
We include a new proof that 
helps to understand the appearance of the angle $\vphi_p$.

\begin{proof}
Fix $p \in (1, \infty)$ and write $F = F_p$. To establish~\eqref{scalar-inclusion}
we can restrict to the case that
$0$ is not on the line segment joining $z$ and $w$;
otherwise, $(w-z) \cdot \konj{F_p(w) - F_p(z)} \geq 0.$
We identify, as usual, 
$\C$ with $\R^2$, and note that
$F$ is continuously $\R$-differentiable on $\R^2 \setminus \{0\}$.
Hence, abbreviating $h = w-z$, we obtain
\[ F(z+h) - F(z) = \int_0^1 F'(z + th)h \, \ud{t},
\]
where $F'$ is the Jacobian matrix  of $F$.

Since $\Sigma_p$ is a closed convex cone (note that $\vphi_p \le \frac{\pi}{2}$),
it suffices to prove that
\[ h \cdot \konj{F'(y)h} \in \Sigma_p \quad
\text{for all}\ h\in\R^2,\ y \in \R^2 \setminus \{0\}.
\]
Now, a short elementary computation yields, for $0 \neq  y \in \R^2$,
\[ F'(y) = \abs{y}^{p-2} A_y,
\]
where $A_y := \mathrm{I}_2 + \frac{p{-}2}{\abs{y}^2}\, yy^t$. 
The matrix $A_y$ is symmetric and  has eigenvalues $1$ and $p-1>0$.
(Indeed, $A_yy = (p-1)y$ and $A_yz=z$ for all $z\perp y$.) 
Thus, by Lemma~\ref{geom-lem} below, 
\[ h \cdot \konj{F'(y)h}
\in \Sigma\bigl(\arcsin\rfrac{|p-2|}{p}\bigr) = \Sigma_p
\] 
for all $h\in\R^2$, and this concludes the proof of \eqref{scalar-inclusion}, i.e.,
the inclusion ``$\subseteq$'' in \eqref{scalar-identity}.

For the converse inclusion we denote 
\[ \Sigma := \cl\set{ (w-z) \cdot \konj{F(w) - F(z)} }{ z,w \in \C}.
\]
Since $F(tz) = t^{p{-}1}F(z)$ for all $t > 0$ and $z\in \C$ the set $\Sigma$ is 
invariant
under multiplication with $t > 0$, i.e., a cone. Hence, 
for all $z,h \in \C\setminus\{0\}$ and $t>0$ we obtain
\[
  \frac1t h \cdot \konj{F(z+th)-F(z)}
  = \frac{1}{t^2} (th) \cdot \konj{F(z+th)-F(z)} \in  \Sigma.
\]
Letting $t \searrow 0$ we arrive at
$h \cdot \konj{F'(z)h} \in \Sigma$,
and another application of Lemma~\ref{geom-lem} completes the proof.
\end{proof}

\begin{lem}\label{geom-lem}
Let $A\in\R^{2\times2}$ be a symmetric matrix with 
eigenvalues $1$ and $\lambda>0$. Then
\[
  \set{ h\cdot\konj{Ah} }{ h\in\C }
  = \Sigma\bigl( \arcsin\tfrac{|\lambda-1|}{\lambda+1} \bigr).
\]
\end{lem}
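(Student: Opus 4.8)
The plan is to diagonalize $A$ and compute the set $\{h\cdot\konj{Ah}: h\in\C\}$ explicitly in suitable coordinates. After an orthogonal change of variables (which does not affect the quantity $h\cdot\konj{Ah}$ up to the corresponding substitution $h\mapsto Qh$, since $Q$ is a bijection of $\C=\R^2$), we may assume $A=\operatorname{diag}(1,\la)$. Writing $h=(x,y)^t$ with $x,y\in\R$, one has $h\cdot\konj{Ah}=(x+\ui y)(x-\ui\la y)=x^2+\la y^2+\ui(\la-1)xy$. Thus the target set is
\[
  S:=\set{(x^2+\la y^2,\,(\la-1)xy)}{x,y\in\R}\subseteq\R^2,
\]
identified with a subset of $\C$.

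Next I would identify $S$ as a sector. It is clear that $S$ is a cone (scaling $(x,y)\mapsto(tx,ty)$ scales the pair by $t^2$) and that it is symmetric about the real axis (replace $y$ by $-y$); also the real part $x^2+\la y^2$ is $\ge0$ and vanishes only at $h=0$, so $S\subseteq\Sigma(\vphi)$ for some $\vphi\le\frac\pi2$. To pin down $\vphi$, parametrize by $x=r\cos\theta$, $y=r\sin\theta$ and compute the tangent of the argument of the point, namely $(\la-1)\cos\theta\sin\theta/(\cos^2\theta+\la\sin^2\theta)$; maximizing its absolute value over $\theta$ is a one-variable calculus problem whose optimum is attained at $\tan\theta=\pm1/\sqrt\la$, giving maximal value $\frac{|\la-1|}{2\sqrt\la}$. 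This means $S$ fills out exactly the sector of half-opening angle $\arctan\frac{|\la-1|}{2\sqrt\la}$, and it remains only to reconcile this with $\arcsin\frac{|\la-1|}{\la+1}$: indeed if $\sin\vphi=\frac{|\la-1|}{\la+1}$ then $\cos\vphi=\frac{2\sqrt\la}{\la+1}$ (since $(\la-1)^2+4\la=(\la+1)^2$), so $\tan\vphi=\frac{|\la-1|}{2\sqrt\la}$, matching. One should also check that every argument in $(-\vphi,\vphi)$ is actually attained (the map $\theta\mapsto\arg$ is continuous and runs from $0$ at $\theta=0$ up to the maximum and back), and that the boundary rays are attained at $\tan\theta=\pm1/\sqrt\la$, so the set is already closed — hence equality of sets, not merely of closures.

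The main obstacle, such as it is, is purely the optimization bookkeeping: confirming that the extremum of $\theta\mapsto\frac{(\la-1)\cos\theta\sin\theta}{\cos^2\theta+\la\sin^2\theta}$ is a genuine maximum of the absolute value and that no larger argument is achieved (e.g.\ the real part never becomes negative, so there is no wrap-around past $\pm\frac\pi2$). Everything else — the reduction to diagonal form, the cone and symmetry properties, and the trigonometric identity converting $\arctan$ to $\arcsin$ — is routine. A clean way to avoid case distinctions in $\la\gtrless1$ is to note that replacing $A$ by $\la^{-1}A$ swaps the roles of the eigenvalues and only rescales the set by the positive factor $\la^{-1}$, so one may assume $\la\ge1$ throughout; the formula $\arcsin\frac{|\la-1|}{\la+1}$ is manifestly invariant under $\la\mapsto1/\la$, as a further sanity check.
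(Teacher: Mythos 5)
Your argument is correct, but it takes a more computational route than the paper's. You diagonalize $A$, compute $h\cdot\konj{Ah}=x^2+\la y^2+\ui(1-\la)xy$ explicitly (note the imaginary part is $(1-\la)xy$ rather than your $(\la-1)xy$ --- a harmless slip, since the set is invariant under $y\mapsto -y$), and then maximize $\bigl|\tan\arg\bigr|$ by one-variable calculus, finally converting $\arctan\tfrac{|\la-1|}{2\sqrt\la}$ into $\arcsin\tfrac{|\la-1|}{\la+1}$ via $(\la-1)^2+4\la=(\la+1)^2$. The paper instead observes that $\arg\bigl(h\cdot\konj{Ah}\bigr)$ is the signed angle $\sphericalangle(Ah,h)$, parametrizes $h=(1,x)^t$, and uses the addition formula $\tan a\pm\tan b=\sin(a\pm b)/(\cos a\cos b)$ with $a=\arctan x$, $b=\arctan(\la x)$ to obtain the exact identity $\sin\alpha_x=\tfrac{1-\la}{1+\la}\sin(a+b)$; since $a+b$ sweeps from $-\pi$ to $\pi$, this gives the bound and the attainment of every admissible angle in one stroke, producing the $\arcsin$ form directly with no optimization and no separate intermediate-value or closedness discussion. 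Your version buys routine verifiability (everything is explicit calculus, including the check that the real part stays positive so there is no wrap-around), while the paper's identity explains at a glance where $\arcsin\tfrac{|\la-1|}{\la+1}$ comes from. One small point to tighten in your reduction: either take the diagonalizing $Q$ to be a rotation (always possible for a symmetric $2\times2$ matrix), or note that a reflection flips the sign of the imaginary part of $h\cdot\konj{Ah}$, which is again harmless by the symmetry about the real axis that you already record.
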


\begin{proof}
Note that $\set{ h\cdot\konj{Ah} }{ h\in\C }$ is a cone in $\C$.
Thus it suffices to show that
\begin{equation}\label{args}
I := \set{ \arg\bigl( h\cdot\konj{Ah}\mkern1mu \bigr) }{ h\in\C\setminus\{0\} }
= \bigl[-\arcsin\tfrac{|\lambda -1|}{\lambda +1},\arcsin\tfrac{|\lambda -1|}{\lambda +1}\bigr]. 
\end{equation}
Now observe that for $h \neq 0$,\,
$\arg\bigl( h\cdot\konj{Ah}\mkern1mu \bigr)$
equals $\sphericalangle(Ah,h)$, the signed angle between $Ah$ and $h$. 
As a consequence, one may suppose without loss of generality
that $A=\smat(1, 0)(0, \lambda)$.
Then $I = \set{ \mkern-1mu\sphericalangle\bigl( A\svect(1,x), \svect(1,x) \bigr) }{ x\in\R }$ 
since $\sphericalangle\bigl( A\svect(0,1), \svect(0,1) \bigr) = 0$.
Setting $a := \arctan x$  and  $b := \arctan(\lambda x)$ we obtain
\[
  \alpha_x := \sphericalangle\bigl( A\svect(1,x), \svect(1,x) \bigr)  
= \sphericalangle\bigl( \svect(1,\lambda x), \svect(1,x) \bigr)
= a-b
\]
and, by virtue of the addition formula for the sine, 
\[
  (1 \pm \lambda) x 
  = \tan a \pm \tan b
  = \frac{\sin a}{\cos a} \pm \frac{\sin b}{\cos b}
  = \frac{\sin(a \pm b)}{\cos a \cos b}\,.
\]
Hence,
\[
  \sin\alpha_x = \sin(a-b) = \sin(a+b) \cdot \frac{1-\lambda}{1+\lambda}\,.
\]
Note that the angle $a+b$ passes from $-\pi$ to $\pi$
as $x$ passes from $-\infty$ to $\infty$.
Thus we obtain the identity~\eqref{args}, and the proof is complete.
\end{proof}

\begin{rem}
A more geometric way to prove Lemma~\ref{geom-lem} consists in 
applying the law of the sines in the triangles
$\triangle OBC$ and $\triangle OB'C$, where the points $A, B, B', C$ and $O$
are defined as
$A := (1,0)$, $B := (1,x)$, $B':= (1, -x)$, 
$C := (1, \lambda x)$ and 
$O := (0,0)$.  (The angle of interest $\alpha_x$ appears at $O$ in the 
triangle $\triangle BOC$.) 
\end{rem}

\section{Proof of Theorem 1}\label{proof}

Let us now turn to the proof of Theorem~\ref{thm1}.

\begin{proof}[Proof of Theorem~\ref{thm1}]
Fix $p\in(1,\infty)$ and write $\sk{f}{g}=\int_\Omega f\konj{g}\,\ud{\mu}$
for $f\in \Ell{p}$ and $g\in \Ell{p'}$, $\frac{1}{p} + \frac{1}{p'} = 1$.  
As above, we abbreviate $F(z) = F_p(z)$. 

As noted already, the second assertion of Theorem~\ref{thm1} follows
from the first by virtue of the Lumer--Phillips theorem.
Hence, we have to show that
\[
  \sk{A_pf}{F(f)}\in\Sigma_p,\quad \text{for all}\quad f\in \dom(A_p).
\]
For this it suffices\footnote{It is also necessary since 
$-(\Id - T(t))$ is again the generator of a symmetric $\Ell{\infty}$-contractive 
semigroup
on $\Ell{2}(\Omega,\mu)$, see \cite[Section 3.1]{haase}.}
to show
\[
\bsk{(\Id-T_p(t))f}{F(f)}\in\Sigma_p,\quad \text{for all}\quad 
f\in \Ell{p}(\Omega,\mu),\ t>0,
\]
since one can divide by $t$ and let $t\searrow 0$. 
Moreover, it is sufficient to check this for the dense subset
$\mathcal{D}$ of step functions
\begin{equation}\label{step-function}
f =\sum_{j=1}^n c_j\1_{B_j}
\end{equation}
where the sets $B_j$ are pairwise disjoint measurable sets 
of positive and finite measure and $c_j\in\C\setminus\{0\}$. 
(In order to see this, take an arbitrary $f\in \Ell{p}$ 
and a sequence $(f_n)_n$ of step functions with $\norm{f_n - f}_p \to 0$,
$f_n \to f$ almost everywhere and absolutely dominated 
by some $0 \le g\in \Ell{p}$. Then $F(f_n) \to F(f)$ almost everywhere
and absolutely dominated by $g^{p-1}$, hence in 
$\Ell{p'}$-norm.)\footnote{Combining this with an argument involving  
subsequences shows that the mapping $f \mapsto F(f)$ is  continuous
from $\Ell{p}$ to $\Ell{p'}$.}

Fix $t>0$ and $f$ as in~\eqref{step-function}, so that 
$F(f) =\sum_k F(c_k)\1_{B_k}$.  Define 
$d_j:=\sk{\1_{B_j}}{\1_{B_j}}=\mu(B_j)$ and $a_{kj} = 
\sk{T(t)\1_{B_j}}{\1_{B_k}}$ for  $1\le j, k \le n$. Then
\begin{align*}
 & \bsk{(I-T(t))f}{F(f)} = {\sum}_{jk} c_j\konj{F(c_k)}
\bsk{(I-T(t))\1_{B_j}}{\1_{B_k}}
\\ & \quad = 
{\sum}_j d_j c_j \konj{F(c_j)} -  {\sum}_{jk} c_j\konj{F(c_k)} a_{kj}
\\ & \quad =  {\sum}_j  \Bigl(d_j - {\sum}_k \abs{a_{kj}}\Bigr)\, c_j \konj{F(c_j)}
\,+\,  {\sum}_{jk} \Bigl(c_j \konj{F(c_j)} \abs{a_{kj}} - c_j \konj{F(c_k)} a_{kj}\Bigr).
\end{align*}
  We claim that the first sum satisfies
\[  {\sum}_j  \Bigl(d_j - {\sum}_k \abs{a_{kj}}\Bigr)\, c_j \konj{F(c_j)} \ge 0.
\]
Since $c_j \konj{F(c_j)} = \abs{c_j}^p \ge 0$, it suffices to
show that $\sum_{k} \abs{a_{kj}} \le d_j$. Choose $\la_{kj}$ such that  
$|\la_{kj}|=1$ and $a_{kj}=\la_{kj}|a_{kj}|$.
Then
\[
  {\sum}_k |a_{kj}| ={\sum}_k\konj{\la_{kj}}\bsk{T(t)\1_{B_j}}{\1_{B_k}}
=\bsk{T(t)\1_{B_j}}{{\textstyle\sum}_k\la_{kj} \1_{B_k}}, 
\]
and hence $\sum_k |a_{kj}| \le \nn{T(t)\1_{B_j}}{1} 
\norm{\sum_k\la_{kj} \1_{B_k}}_\infty \le \norm{\1_{B_j}}_1 = d_j$, since $T(t)$ is an 
$\Ell{1}$-contraction.

In order to deal with the second sum, we note that, by symmetry,
\[
a_{jk} =\sk{T(t)\1_{B_k}}{\1_{B_j}}=\sk{\1_{B_k}}{T(t)\1_{B_j}}= \konj{a_{kj}}.
\]
Therefore and since $\lambda_{kj}F(c_j) = F(\lambda_{kj}c_j)$, 
\begin{align*}
\sum_{j,k} & \Bigl( c_j \konj{F(c_j)} \abs{a_{kj}} - 
c_j\konj{F(c_k)}a_{kj} \Bigr)\\
&=  \frac{1}{2} 
\sum_{j,k} \Bigl(c_j \konj{F(c_j)} \abs{a_{kj}} - 
c_j\konj{F(c_k)}a_{kj} + 
c_k \konj{F(c_k)} \abs{a_{kj}} - 
c_k\konj{F(c_j)} \konj{a_{kj}} \Bigr) 
\\ & = 
\frac{1}{2} 
\sum_{j,k} \abs{a_{kj}} 
(\lambda_{kj}c_j  - c_k) \bigl(\konj{F(\lambda_{kj}c_j)} - \konj{F(c_k)}\mkern1mu\bigr)
\in \Sigma_p
\end{align*}
by Lemma~\ref{LP-lem}.
This concludes the proof.
\end{proof}

\section{Relation to the Existing Proofs}\label{s.comparison}

Our elementary proof proceeds basically along the same reduction lines
as the proof in \cite{haase}. In fact, the main ingredient in the  proof given above
was the fact (established in Lemma~\ref{LP-lem})  that 
\[ (\lambda w -z) \cdot \konj{\lambda F(w) -F(z)} \in \Sigma_p
\]
for all $w,z,\lambda \in \C$ with $\abs{\lambda}=1$.  
A short computation reveals
that this is actually equivalent to Theorem 1 being valid for
the special cases
\[  A =  \begin{pmatrix}  1 & -\konj{\lambda} \\ -\lambda & 1 \end{pmatrix}.
\]
where $\lambda \in \C$ with $\abs{\lambda} =1$. 

The main difference to the paper \cite{haase} is that here
we perform an immediate reduction to a finite atomic measure space
similar as in \cite{lisk-perel}, where
\cite{haase}, following \cite{carb-drag}, takes the detour
via a compact model.
To make this precise, let us consider 
as above the function $f$ as in~\eqref{step-function}
and define the atomic measure space $\Omega' := \{ 1, \dots, n\}$
with $\mu' = \sum_{j=1}^n \mu(B_j) \delta_ {\{j\}}$. On $\Ell{2}(\Omega',\mu')$ consider
the matrix
\[ S = (\tfrac{a_{jk}}{\mu(B_j)})_{j,k}
\]
where $a_{jk} = \sk{T(t)\1_{B_k}}{\1_{B_j}}$ as above. Let
$v := (c_1\,\, \dots\,\, c_n)^t$; then a short computation reveals that
\begin{equation}\label{reduction}
 \bsk{(\Id - S)v}{F(v)}_{\Ell{2}(\Omega',\mu')} 
= \bsk{(\Id - T(t))f}{F(f)}_{\Ell{2}(\Omega,\mu)}.
\end{equation}
The operator $S$ can be written as $J^*T(t)J$, where 
\[ J: \Ell{2}(\Omega',\mu') \to \Ell{2}(\Omega,\mu) ,\quad 
(c_j)_j \mapsto \sum_{j=1}^n c_j \1_{B_j}
\]
is the natural isometric lattice embedding and $J$ is its Hilbert space 
adjoint. (Note that from this observation it is straightforward that $S$ is
an $\Ell{1}$-contraction, a fact that has been proved in Section~\ref{proof} by direct
computation.)

Identity~\eqref{reduction}
implies that Theorem~\ref{thm1} is true in general
if it is true for finite atomic measure spaces. Such spaces are in particular
compact, and the remaining part of the proof in the previous section
is nothing but an adaptation of the proof of  \cite[Theorem 4.15]{haase}
to this special situation.

\begin{rem}
It is straightfoward to conjecture that also the general results
of \cite{haase}, Theorem 2.2-2.4, can be proved
by a direct reduction to finite atomic measure spaces and
avoiding the use of compact models and the sophisticated
operator theory presented in Section 4 of \cite{haase}.
This is indeed true, and will be the topic of a future publication.
\end{rem}

\end{document}